\theoremstyle{plain}
\newtheorem{theorem}{Theorem}[section]
\newtheorem{corollary}[theorem]{Corollary}
\newtheorem{proposition}[theorem]{Proposition}
\theoremstyle{definition}
\renewcommand{\emptyset}{\varnothing}
\DeclareMathOperator{\uhr}{\upharpoonright}
\DeclareMathOperator{\R}{\mathbb{R}}
\numberwithin{equation}{section} 
\begin{document}

\title{Sections, Selections and Prohorov's Theorem}

\author{Valentin Gutev}

\address{School of Mathematical Sciences, University of KwaZulu-Natal,
  Westville Campus, Private Bag X54001, Durban 4000, South Africa}

\email{gutev@ukzn.ac.za}


\thanks{Research of the first author is supported in part by the NRF
  of South Africa.}

\author{Vesko Valov}

\address{Department of Computer Science and Mathematics, Nipissing
  University, 100 College Drive, P.O. Box 5002, North Bay, ON, P1B
  8L7, Canada}

\email{veskov@nipissingu.ca}

\thanks{The second author was partially supported by NSERC Grant
  261914-03.}

\subjclass[2000]{Primary 54C60, 60B05; Secondary 54B20, 54C65, 28A33.}

\keywords{Set-valued mapping, lower semi-continuous, upper
  semi-continuous, selection, section, Radon probability measure.}

\begin{abstract}
  The famous Prohorov theorem for Radon probability measures is
  generalized in terms of usco mappings. In the case of completely
  metrizable spaces this is achieved by applying a classical Michael
  result on the existence of usco selections for l.s.c.\ mappings. A
  similar approach works when sieve-complete spaces are considered.
\end{abstract}

\date{\today}
\maketitle

\section{Introduction}
\label{section-introduction}

All spaces in this paper are assumed to be completely regular and
Hausdorff. For a space $X$, let $\mathscr{B}(X)$ be the Borel
$\sigma$-algebra associated to $X$, i.e.\ the smallest $\sigma$-algebra that
contains all closed subsets of $X$. Thus, $\mathscr{B}(X)$ is closed
with respect to complements and countable unions, its elements are
often called \emph{Borel} subsets of $X$.\medskip

A countably additive function $\mu:\mathscr{B}(X)\to [0,+\infty]$ is called a
\emph{Radon measure} on $X$ if
\begin{equation}
  \label{eq:radon-measure}
  \mu(B)=\sup\big\{\mu(K): K\subset B\ \text{and}\ K\ \text{is compact}\big\},\quad B\in
  \mathscr{B}(X).
\end{equation}
A \emph{Radon probability measure} is a Radon measure $\mu$, with
$\mu(X)=1$. In the sequel, we will denote by $\mathscr{P}(X)$ the set of
all Radon probability measures on $X$.  Every measure $\mu\in
\mathscr{P}(X)$ uniquely defines a positive linear functional $\mu(g)=\int
g d\mu$, where $g$ runs over the bounded continuous functions on $X$. As
a topological space, we consider $\mathscr{P}(X)$ endowed with the
weakest topology with respect to which all these functionals are
continuous. Thus, a net $\{\mu_\alpha\}\subset \mathscr{P}(X)$ converges to $\mu\in
\mathscr{P}(X)$ if and only if $\{\mu_\alpha(g)\}$ converges to $\mu(g)$ for
every bounded continuous function $g:X\to \R$. With respect to this
topology, for every closed $F\subset X$ and $\varepsilon>0$,
\begin{equation}
  \label{eq:open-set}
  \text{$\{\mu\in \mathscr{P}(X): \mu(F)<\varepsilon\}$ is open in
    $\mathscr{P}(X)$.}
\end{equation}

The famous Prohorov theorem \cite{prohorov:56} states that if $X$ is a
Polish space (i.e., a completely metrizable separable space), then for
every compact $T\subset \mathscr{P}(X)$ and every $\varepsilon > 0$ there exists a
compact $K\subset X$, with $\mu(X\setminus K) < \varepsilon$ for all $\mu\in T$. Spaces having this
property, called \emph{Prohorov spaces}, are widely investigated in
the literature.\medskip

In this paper, we give a simple proof that all sieve-complete spaces
are Prohorov (Theorem \ref{theorem-prohorov-sections}). In the special
case of completely metrizable spaces, this result follows by the
Michael theorem on the existence of usco selections for l.s.c.\
mappings, \cite[Theorem 1.1]{michael:59}. The general case of
arbitrary sieve-complete spaces follows by a selection-like result
\cite[Corollary 7.2]{gutev:07} which utilizes ``usco sections''
instead of ``usco selections''.\medskip

The idea to use some selection theorem for the proof of Prohorov's
theorem goes back to a question of Bouziad \cite{bouziad:01}. In fact,
our approach provides a natural generalization of Prohorov's theorem
in which the compact subset $T\subset \mathscr{P}(X)$ is replaced by a
paracompact one $Z\subset \mathscr{P}(X)$, and the compact $K\subset X$ --- by an
usco mapping from $Z$ into the compact subsets of $X$. This gives a
solution to another problem of Bouziad \cite{bouziad:01} whether there
is a ``continuous'' version of Prohorov's theorem, see Corollary
\ref{corollary-prohorov-selections}.  \medskip

The paper is organized as follows. Section \ref{section-lsc-mappings}
is devoted to the main ingredient of our approach which is a
construction of l.s.c.\ mappings generated by Radon probability
measures (Proposition \ref{proposition-generate-lsc}). Section
\ref{section-usco-selections} contains the proof of Theorem
\ref{theorem-prohorov-sections} which is preceded by that one for the
special case of completely metrizable spaces.

\section{A construction of l.s.c.\ mappings}
\label{section-lsc-mappings}

For a space $X$, let $2^X$ be the family of all nonempty subsets of
$X$, and let $\mathscr{C}(X)$ be the subfamily of $2^X$ which consists
of all compact members of $2^X$. A part of our considerations will
involve $\mathscr{C}(X)$ endowed with the \emph{Vietoris topology}
$\tau_V$. Recall that $\tau_V$ is generated by all collections of the form
$$
\langle\mathscr{V}\rangle = \left\{S\in \mathscr{C}(X) : S\subset \bigcup \mathscr{V}\ \
  \text{and}\ \ S\cap V\neq \emptyset,\ \hbox{whenever}\ V\in \mathscr{V}\right\},
$$
where $\mathscr{V}$ runs over the finite families of open subsets of
$X$. For convenience, for an open subset $V\subset X$, we write $\langle V\rangle$
rather than $\langle\{V\}\rangle$.\medskip

Another topology on $\mathscr{C}(X)$ that will play an important role
in this paper is the \emph{upper Vietoris topology} $\tau_V^+$, i.e.\ the
topology generated by the family
\[
\big\{\langle V\rangle: V\subset X\ \text{is open}\big\}.
\]
Clearly, $\tau_V^+$ is a coarser topology than the Vietoris one $\tau_V$,
i.e.\ $\tau_V^+\subset \tau_V$. In this regard, let us make the explicit agreement
that if $\tau$ is a topology on $\mathscr{C}(X)$, then the prefix
``$\tau$-'' will be used to express properties related to the topology
$\tau$, say $\tau$-open sets, $\tau$-closure, etc.\medskip

Finally, let us recall that a set-valued mapping $\Phi:Z\to 2^Y$ is
\emph{lower semi-continuous}, or l.s.c., if the set
\[
\Phi^{-1}(U)=\{z\in Z: \Phi(z)\cap U\neq \emptyset\}
\]
is open in $Z$ for every open $U\subset Y$.

\begin{proposition}
  \label{proposition-generate-lsc}
  Let $X$ be a space, and let $\varepsilon\in (0,1)$. Define a set-valued mapping
  $\Psi_\varepsilon:\mathscr{P}(X)\to 2^{\mathscr{C}(X)}$ by
  \[
  \Psi_\varepsilon(\mu)=\big\{K\in \mathscr{C}(X): \mu(X\setminus K)<\varepsilon\big\}, \ \mu\in \mathscr{P}(X).
  \]
  Then, $\Psi_\varepsilon$ is a nonempty-valued $\tau_V$-l.s.c.\ mapping.
\end{proposition}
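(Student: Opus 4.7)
The plan is to treat the two conclusions separately. Non-emptiness follows immediately by applying the Radon condition \eqref{eq:radon-measure} to $B=X$: since $\mu(X)=1>1-\varepsilon$, there must be a compact $K\subset X$ with $\mu(K)>1-\varepsilon$, and then $K\in \Psi_\varepsilon(\mu)$.

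For $\tau_V$-lower semi-continuity I would reduce the check to basic open sets. Since $\langle\mathscr{V}\rangle$, with $\mathscr{V}=\{V_1,\dots,V_n\}$ ranging over finite families of open subsets of $X$, forms a base for $\tau_V$, and preimages commute with unions, it suffices to show openness of $\Psi_\varepsilon^{-1}(\langle\mathscr{V}\rangle)$. Fix $\mu_0$ in this preimage, pick a witness $K_0\in \Psi_\varepsilon(\mu_0)\cap\langle\mathscr{V}\rangle$, set $U=\bigcup\mathscr{V}$, and choose points $x_i\in K_0\cap V_i$ for each $i$. Because $\mu_0(U)\geq \mu_0(K_0)>1-\varepsilon$, I can pick $c$ with $1-\varepsilon<c<\min\{\mu_0(U),1\}$ and consider
\[
  W=\{\mu\in \mathscr{P}(X): \mu(U)>c\}.
\]
Rewriting $\mu(U)>c$ as $\mu(X\setminus U)<1-c$ and invoking \eqref{eq:open-set} with the closed set $F=X\setminus U$ shows that $W$ is an open neighbourhood of $\mu_0$.

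The heart of the argument is then to produce, for each $\mu\in W$, a single compact $K$ that meets every $V_i$, lies inside $U$, and carries $\mu$-mass above $1-\varepsilon$. I do this by applying the Radon condition to $\mu$ on the Borel set $U$: it yields a compact $L\subset U$ with $\mu(L)>c>1-\varepsilon$. Taking $K=L\cup\{x_1,\dots,x_n\}$ gives a compact subset of $U$ that contains $x_i\in V_i$ for each $i$ and still satisfies $\mu(K)\geq \mu(L)>1-\varepsilon$. Thus $K\in \Psi_\varepsilon(\mu)\cap\langle\mathscr{V}\rangle$, which gives $W\subset \Psi_\varepsilon^{-1}(\langle\mathscr{V}\rangle)$ and finishes the argument.

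The only real subtlety I anticipate is the requirement that $K$ meet every $V_i$, because the compact set $L$ coming from Radon inner regularity of $\mu$ need not intersect any particular $V_i$. The cheap remedy of adjoining the finitely many anchor points $x_i$ already chosen inside $K_0\cap V_i$ repairs this condition without affecting either compactness of $K$ or the measure inequality $\mu(K)\geq \mu(L)$.
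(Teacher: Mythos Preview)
Your proof is correct and follows essentially the same route as the paper's: both arguments use \eqref{eq:open-set} applied to the closed set $X\setminus\bigcup\mathscr{V}$ to get an open neighbourhood of $\mu_0$, then apply Radon inner regularity inside $\bigcup\mathscr{V}$ and adjoin a piece of the original witness $K_0$ to force intersection with every $V_i$. The only cosmetic differences are that the paper dispenses with the auxiliary constant $c$ (taking the neighbourhood $\{\nu:\nu(X\setminus\bigcup\mathscr{V})<\varepsilon\}$ directly) and adjoins all of $K_0$ rather than just the anchor points $x_i$.
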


\begin{proof}
  Take $\mu\in \mathscr{P}(X)$. Since $\mu(X)=1>1-\varepsilon$, by
  (\ref{eq:radon-measure}), there is $K\in \mathscr{C}(X)$ such that
  $\mu(K)>1-\varepsilon$, so $\Psi_\varepsilon(\mu)\neq\emptyset$. Let $K\in \Psi_\varepsilon(\mu)$ and let $\mathscr{V}$ be
  a finite family of open subsets of $X$, with $K\in \langle
  \mathscr{V}\rangle$. Then, $X\setminus \bigcup\mathscr{V}\subset X\setminus K$, it is closed in $X$
  and $\mu\left(X\setminus\bigcup \mathscr{V}\right)<\varepsilon$. Hence, by
  (\ref{eq:open-set}), there exists a neighbourhood $U$ of $\mu$ such
  that $\nu\left(X\setminus\bigcup \mathscr{V}\right)<\varepsilon$ for every $\nu\in U$. If $\nu\in U$,
  then $\nu\left(\bigcup \mathscr{V}\right)>1-\varepsilon$ and, by
  (\ref{eq:radon-measure}), there is a compact subset $H\subset \bigcup
  \mathscr{V}$, with $\nu(H)>1-\varepsilon$. We now have that $H\cup K\in \langle
  \mathscr{V}\rangle$, while $H\cup K\in\Psi_\varepsilon(\nu)$ because $\nu\big(X\setminus (H\cup K)\big)\leq
  \nu(X\setminus H)<\varepsilon$.
\end{proof}

\begin{proposition}
  \label{proposition-closure-control}
  Let $X$ be a space, $\varepsilon\in (0,1)$, $\Psi_\varepsilon:\mathscr{P}(X)\to
  2^{\mathscr{C}(X)}$ be defined as in Proposition
  \ref{proposition-generate-lsc}, and let $\Phi_\varepsilon(\mu)$ be the
  $\tau_V^+$-closure of $\Psi_\varepsilon(\mu)$, for each $\mu\in \mathscr{P}(X)$. Then,
  $\mu(X\setminus K)\leq \varepsilon$ for every $K\in \Phi_\varepsilon(\mu)$ and $\mu\in \mathscr{P}(X)$.
\end{proposition}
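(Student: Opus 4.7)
The plan is to unwind the definition of the $\tau_V^+$-closure and combine it with the Radon inner-regularity of $\mu$ together with the standard separation of a compactum from a disjoint closed set in a Hausdorff space.

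First, I would fix $K\in \Phi_\varepsilon(\mu)$ and translate membership in the closure into a concrete statement: since the basic $\tau_V^+$-neighbourhoods of $K$ are exactly the sets $\langle V\rangle$ with $V\subset X$ open and $K\subset V$, the hypothesis is that for every open $V\supset K$ there exists a compact $H\subset V$ with $\mu(X\setminus H)<\varepsilon$. From $H\subset V$ I get $X\setminus V\subset X\setminus H$, hence
\[
\mu(X\setminus V)\le \mu(X\setminus H)<\varepsilon\qquad \text{for every open }V\supset K.
\]

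Next, I would invoke the Radon property applied to the Borel set $X\setminus K$: there are compact subsets $C\subset X\setminus K$ with $\mu(C)$ arbitrarily close to $\mu(X\setminus K)$. Each such $C$ is closed in the Hausdorff space $X$ and disjoint from the compactum $K$, so $K$ and $C$ can be separated by disjoint open sets; in particular there is an open $V\supset K$ with $C\subset X\setminus V$. The previous step then yields $\mu(C)\le\mu(X\setminus V)<\varepsilon$.

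Taking the supremum over all such $C$ and using (\ref{eq:radon-measure}) gives $\mu(X\setminus K)\le \varepsilon$, which is the desired conclusion. There is no real obstacle here; the only subtle point worth noting is that although every compact $C\subset X\setminus K$ satisfies the strict inequality $\mu(C)<\varepsilon$, passing to the supremum degrades ``$<$'' to ``$\le$'', which is exactly why the statement asserts $\mu(X\setminus K)\le\varepsilon$ rather than strict inequality.
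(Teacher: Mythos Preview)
Your proof is correct and follows essentially the same approach as the paper's: both use Radon inner regularity to produce a compact $C\subset X\setminus K$ and then pass to an open $V\supset K$ disjoint from $C$. The paper argues by contrapositive and simply takes $V=X\setminus C$ (compact, hence closed in the Hausdorff space $X$), so your appeal to separation of disjoint compacta is a slight detour---you could just set $V=X\setminus C$ directly.
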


\begin{proof}
  Take $\mu\in \mathscr{P}(X)$ and $K\in \mathscr{C}(X)$ such that $\mu(X\setminus
  K)>\varepsilon$. By (\ref{eq:radon-measure}), there exists a compact subset
  $H\subset X\setminus K$, with $\mu(H)>\varepsilon$. Let $V=X\setminus H$. We now have that $K\in \langle V\rangle$,
  while $\varepsilon<\mu(H)=\mu(X\setminus V)\leq \mu(X\setminus S)$ for every $S\in \langle V\rangle$. Consequently,
  $K\notin \Phi_\varepsilon(\mu)$ because $\Psi_\varepsilon(\mu)\subset \mathscr{C}(X)\setminus \langle{V}\rangle$.
\end{proof}

We conclude this section with a well-known property of compact sets in
the upper Vietoris topology.

\begin{proposition}
  \label{proposition-compact-upper}
  Let $\mathscr{K}\subset \mathscr{C}(X)$ be a $\tau_V^+$-compact set. Then, $\bigcup
  \mathscr{K}$ is compact in $X$.
\end{proposition}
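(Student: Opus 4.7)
The plan is to prove compactness of $\bigcup\mathscr{K}$ directly from the open cover definition, exploiting the fact that each individual $K\in\mathscr{K}$ is compact (so any open cover of $\bigcup\mathscr{K}$ gets finitely ``trapped'' around $K$), and that these finite traps correspond to basic $\tau_V^+$-open neighbourhoods of $K$ in $\mathscr{C}(X)$.

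In detail, let $\mathcal{U}$ be an arbitrary family of open subsets of $X$ covering $\bigcup\mathscr{K}$. For each $K\in\mathscr{K}$, since $K\subset\bigcup\mathcal{U}$ and $K$ is compact, I can choose a finite subfamily $\mathcal{U}_K\subset\mathcal{U}$ with $K\subset V_K$, where $V_K=\bigcup\mathcal{U}_K$. Then $K\in\langle V_K\rangle$, and $\langle V_K\rangle$ is by definition a $\tau_V^+$-open subset of $\mathscr{C}(X)$. Hence $\{\langle V_K\rangle:K\in\mathscr{K}\}$ is a $\tau_V^+$-open cover of $\mathscr{K}$.

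Applying $\tau_V^+$-compactness of $\mathscr{K}$ produces finitely many $K_1,\dots,K_n\in\mathscr{K}$ with $\mathscr{K}\subset\bigcup_{i=1}^{n}\langle V_{K_i}\rangle$. For every $L\in\mathscr{K}$ there is some $i$ with $L\in\langle V_{K_i}\rangle$, i.e.\ $L\subset V_{K_i}$; therefore $\bigcup\mathscr{K}\subset\bigcup_{i=1}^{n}V_{K_i}=\bigcup_{i=1}^{n}\bigcup\mathcal{U}_{K_i}$, which is a finite union of members of $\mathcal{U}$. This finite subcover witnesses the compactness of $\bigcup\mathscr{K}$.

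There is no genuine obstacle here: the only subtlety worth flagging is that one must use basic $\tau_V^+$-open sets of the form $\langle V\rangle$ for a single open $V$ (rather than the richer Vietoris basic sets), which is precisely what the definition of $\tau_V^+$ allows and what makes the ``one open set per compactum'' bookkeeping go through cleanly.
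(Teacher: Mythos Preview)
Your proof is correct and follows essentially the same approach as the paper: both start from an open cover $\mathscr{U}$ of $\bigcup\mathscr{K}$, pass to the $\tau_V^+$-open cover of $\mathscr{K}$ given by sets $\langle\bigcup\mathscr{E}\rangle$ for finite $\mathscr{E}\subset\mathscr{U}$, and extract a finite subcover. The only cosmetic difference is that the paper considers the entire family $\{\langle\bigcup\mathscr{E}\rangle:\mathscr{E}\subset\mathscr{U}\ \text{finite}\}$ at once, whereas you explicitly select one $V_K=\bigcup\mathcal{U}_K$ per $K\in\mathscr{K}$ using compactness of $K$; the substance of the argument is identical.
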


\begin{proof}
  Take an open in $X$ cover $\mathscr{U}$ of $\bigcup\mathscr{K}$. Then,
  $\Omega=\big\{\langle \bigcup\mathscr{E}\rangle: \mathscr{E}\subset \mathscr{U}\ \text{is
    finite}\big\}$ is a $\tau_V^+$-open cover of $\mathscr{K}$. Hence, $\Omega$
  contains a finite subcover of $\mathscr{K}$, so there exists a
  finite $\mathscr{V}\subset \mathscr{U}$, with $\mathscr{K}\subset \bigcup \big\{\langle
  \bigcup\mathscr{E}\rangle: \mathscr{E}\subset \mathscr{V}\ \text{is
    finite}\big\}$. This $\mathscr{V}$ is a finite cover of
  $\bigcup\mathscr{K}$.
\end{proof}

\section{Usco mappings and Prohorov's theorem}
\label{section-usco-selections}

Recall that a set-valued mapping $\psi :Z\to 2^X$ is \emph{upper
  semi-continuous}, or u.s.c., if the set
\[
\psi^{\#}(U)=\{z\in Z: \psi(z)\subset U\}
\]
is open in $Z$ for every open $U\subset X$. We say that $\psi:Z\to 2^X$ is
\emph{usco} if it is u.s.c.\ and compact-valued. Let us explicitly
mention that if $\psi:Z\to \mathscr{C}(X)$ is usco, then $\psi(T)=\bigcup\{\psi(z):z\in
T\}$ is compact for every compact $T\subset Z$. \medskip

A space $X$ is \emph{sieve-complete} \cite{chaber-choban-nagami:74} if
it has an open complete sieve. Every {\v C}ech-complete space is
sieve-complete, and it was shown in \cite{chaber-choban-nagami:74}
(see, also, \cite{michael:77}) that the two concepts are equivalent in
the presence of paracompactness.

\begin{theorem}
  \label{theorem-prohorov-sections}
  Let $X$ be a sieve-complete space, and let $Z\subset \mathscr{P}(X)$ be
  paracompact. Then, for every $\varepsilon>0$ there is an usco mapping $\varphi:Z\to
  \mathscr{C}(X)$ such that $\mu(X\setminus \varphi(\mu))<\varepsilon$ for every $\mu\in Z$.
\end{theorem}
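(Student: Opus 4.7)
The plan is to apply a selection-type theorem to the l.s.c.\ mapping $\Psi_{\varepsilon'}$ from Proposition \ref{proposition-generate-lsc} (for a suitably chosen $\varepsilon'\in(0,\varepsilon)$) and then take pointwise unions. Concretely, I would aim to produce an intermediate $\tau_V^+$-usco map $\phi:Z\to 2^{\mathscr{C}(X)}$ whose values lie in the $\tau_V^+$-closure $\Phi_{\varepsilon'}(\mu)$ of $\Psi_{\varepsilon'}(\mu)$; then $\varphi(\mu):=\bigcup\phi(\mu)$ will be the desired usco map into $\mathscr{C}(X)$.

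To obtain $\phi$, in the completely metrizable case one can equip $\mathscr{C}(X)$ with its Hausdorff metric (whose topology is $\tau_V$) and apply the classical Michael usco selection theorem \cite[Theorem 1.1]{michael:59} to the $\tau_V$-l.s.c.\ mapping $\mu\mapsto\overline{\Psi_{\varepsilon'}(\mu)}^{\tau_V}$; this yields a $\tau_V$-usco (hence $\tau_V^+$-usco) $\phi$ with values in $\overline{\Psi_{\varepsilon'}(\mu)}^{\tau_V}\subset\Phi_{\varepsilon'}(\mu)$. For arbitrary sieve-complete $X$ the Vietoris topology on $\mathscr{C}(X)$ need not be completely metrizable, so Michael's theorem is unavailable; instead I would invoke the ``usco section'' result \cite[Corollary 7.2]{gutev:07} to produce the analogous $\phi$ directly with respect to $\tau_V^+$.

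Given such a $\phi$, the remaining verifications are short. Proposition \ref{proposition-compact-upper} immediately gives that $\varphi(\mu)=\bigcup\phi(\mu)$ is compact in $X$. For any $K\in\phi(\mu)\subset\Phi_{\varepsilon'}(\mu)$, Proposition \ref{proposition-closure-control} yields $\mu(X\setminus K)\le\varepsilon'$, and since $K\subset\varphi(\mu)$ this forces $\mu(X\setminus\varphi(\mu))\le\varepsilon'<\varepsilon$. Upper semi-continuity of $\varphi$ at a point $\mu_0\in Z$ follows from the observation that, whenever $\varphi(\mu_0)\subset V$ for an open $V\subset X$, every $K\in\phi(\mu_0)$ lies in $V$, so $\phi(\mu_0)\subset\langle V\rangle$; the $\tau_V^+$-u.s.c.\ of $\phi$ then produces a neighbourhood $U$ of $\mu_0$ with $\phi(\mu)\subset\langle V\rangle$, i.e.\ $\varphi(\mu)\subset V$, for all $\mu\in U$.

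The principal obstacle is the invocation of \cite[Corollary 7.2]{gutev:07}, which is the deep ``section'' result replacing Michael's selection theorem when $\mathscr{C}(X)$ fails to be completely metrizable; everything else is a formal combination of Propositions \ref{proposition-generate-lsc}, \ref{proposition-closure-control} and \ref{proposition-compact-upper}. This is also why the authors choose to present the completely metrizable case, where Michael's classical theorem suffices, as a warm-up before the general sieve-complete case.
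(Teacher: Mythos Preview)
Your approach is essentially the paper's own, including the warm-up via Michael's theorem in the completely metrizable case and the passage to \cite[Corollary 7.2]{gutev:07} for general sieve-complete $X$. Two small points of precision: (1) in the general case, \cite[Corollary 7.2]{gutev:07} only yields a \emph{section} $\theta$ with $\theta(\mu)\cap\Phi_{\varepsilon'}(\mu)\neq\emptyset$, not a selection with $\theta(\mu)\subset\Phi_{\varepsilon'}(\mu)$ as you write---this is harmless, since a single $K\in\theta(\mu)\cap\Phi_{\varepsilon'}(\mu)$ already suffices to bound $\mu(X\setminus\varphi(\mu))$; (2) to invoke that corollary one needs $(\mathscr{C}(X),\tau_V^+)$ to be sieve-complete whenever $X$ is, which the paper cites from \cite[Lemma 3.1]{nedev-pelant-valov-09}.
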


Turning to the proof of Theorem \ref{theorem-prohorov-sections}, let
us first demonstrate the special case of a completely metrizable
$X$. In this case, let $\Psi_\varepsilon :\mathscr{P}(X)\to 2^{\mathscr{C}(X)}$ be
defined as in Proposition \ref{proposition-generate-lsc}, and let
$\Phi(\mu)$ be the $\tau_V$-closure of $\Psi_\varepsilon(\mu)$, for each $\mu\in
\mathscr{P}(X)$. By Proposition \ref{proposition-generate-lsc} and
\cite[Proposition 2.3]{michael:56a}, $\Phi:\mathscr{P}(X)\to
2^{\mathscr{C}(X)}$ is $\tau_V$-l.s.c.  Also, $(\mathscr{C}(X),\tau_V)$ is
completely metrizable because so is $X$,
\cite{hausdorff:14,kuratowski:56,michael:51}. Hence, by \cite[Theorem
1.1]{michael:59}, $\Phi\uhr Z$ has a $\tau_V$-usco selection $\theta:Z\to
2^{\mathscr{C}(X)}$. That is, $\theta$ is a $\tau_V$-usco mapping such that
$\theta(\mu)\subset \Phi(\mu)$ for every $\mu\in Z$. Then, define $\varphi:Z\to \mathscr{C}(X)$ by
letting $\varphi(\mu)=\bigcup \theta(\mu)$, $\mu\in Z$. This $\varphi$ is as required. Indeed, each
$\theta(\mu)$, $\mu\in Z$, is $\tau_V$-compact, hence $\tau_V^+$-compact as well, and,
by Proposition \ref{proposition-compact-upper}, each $\varphi(\mu)$, $\mu\in Z$,
is a compact subset of $X$. If $V$ is a neighbourhood of $\varphi(\mu)$ for
some $\mu\in Z$, then $\langle V\rangle$ is a neighbourhood of $\theta(\mu)$. This implies
that $\varphi$ is u.s.c. Finally, take $\mu\in Z$ and $K\in \theta(\mu)\subset \Phi(\mu)$. Since
$\tau_V^+\subset \tau_V$, we have that $\Phi(\mu)$ is a subset of the $\tau_V^+$-closure
of $\Psi_\varepsilon(\mu)$. Therefore, by Proposition
\ref{proposition-closure-control}, $\mu\big(X\setminus \varphi(\mu)\big)\leq \mu(X\setminus K)\leq \varepsilon$
because $K\subset \varphi(\mu)$.\medskip

The proof of Theorem \ref{theorem-prohorov-sections} for the general
case of arbitrary sieve-complete spaces follows exactly the same idea
but is now based on the upper Vietoris topology and another
selection-like result for usco mappings.

\begin{proof}[Proof of Theorem \ref{theorem-prohorov-sections}]
  Let $X$ and $Z\subset \mathscr{P}(X)$ be as in that theorem, and let $\varepsilon\in
  (0,1)$. Also, for each $\mu\in \mathscr{P}(X)$, let $\Phi_\varepsilon(\mu)$ be the
  $\tau_V^+$-closure of $\Psi_\varepsilon(\mu)$, where $\Psi_\varepsilon:\mathscr{P}(X)\to
  2^{\mathscr{C}(X)}$ is defined as in Proposition
  \ref{proposition-generate-lsc}. By Proposition
  \ref{proposition-generate-lsc} and \cite[Proposition
  2.3]{michael:56a}, $\Phi_\varepsilon:\mathscr{P}(X)\to 2^{\mathscr{C}(X)}$ is
  $\tau_V^+$-l.s.c.\ because $\tau_V^+\subset \tau_V$. By \cite[Lemma
  3.1]{nedev-pelant-valov-09}, $(\mathscr{C}(X),\tau_V^+)$ is
  sieve-complete because so is $X$. Hence, by \cite[Corollary
  7.2]{gutev:07}, $\Phi_\varepsilon\uhr Z$ has a $\tau_V^+$-usco section $\theta:Z\to
  2^{\mathscr{C}(X)}$. That is, $\theta$ is a $\tau_V^+$-usco mapping such
  that $\theta(\mu)\cap \Phi_\varepsilon(\mu)\neq\emptyset$ for every $\mu\in Z$. Finally, define the required
  $\varphi:Z\to \mathscr{C}(X)$ by $\varphi(\mu)=\bigcup \theta(\mu)$, $\mu\in Z$. By Proposition
  \ref{proposition-compact-upper}, each $\varphi(\mu)$, $\mu\in Z$, is a compact
  subset of $X$. Just like before $\varphi$ is u.s.c.\ because if $V$ is a
  neighbourhood of $\varphi(\mu)$ for some $\mu\in Z$, then $\langle V\rangle$ is a
  neighbourhood of $\theta(\mu)$. Finally, if $\mu\in Z$ and $K\in \theta(\mu)\cap \Phi_\varepsilon(\mu)$,
  then, by Proposition \ref{proposition-closure-control}, $\mu\big(X\setminus
  \varphi(\mu)\big)\leq \mu(X\setminus K)\leq \varepsilon$ because $K\subset \varphi(\mu)$. The proof is completed.
\end{proof}

It is well-known that $\mathscr{P}(X)$ is paracompact (and
{\v C}ech-complete) whenever $X$ is so,
\cite{banakh:95,topsoe:72,topsoe:74}, see also \cite{choban:06}. This
gives the following immediate consequence.

\begin{corollary}
  \label{corollary-prohorov-selections}
  Let $X$ be a paracompact {\v C}ech-complete space, and $\varepsilon>0$. Then,
  there is an usco mapping $\varphi:\mathscr{P}(X)\to \mathscr{C}(X)$ such
  that $\mu(X\setminus \varphi(\mu))<\varepsilon$ for every $\mu\in \mathscr{P}(X)$. In particular,
  $\Phi(T)=\bigcup\{\varphi(\mu):\mu\in T\}$, $T\in \mathscr{C}(\mathscr{P}(X))$, defines a
  continuous map $\Phi:\big(\mathscr{C}(\mathscr{P}(X)),\tau_V^+\big)\to
  \big(\mathscr{C}(X),\tau_V^+\big)$ such that $\mu(X\setminus \Phi(T))<\varepsilon$ for every
  $T\in \mathscr{C}(\mathscr{P}(X))$ and $\mu\in T$.
\end{corollary}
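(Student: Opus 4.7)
The plan is to derive the corollary directly from Theorem \ref{theorem-prohorov-sections} applied with $Z=\mathscr{P}(X)$, and then to unpack what the usco property of $\varphi$ says about the induced map on $\mathscr{C}(\mathscr{P}(X))$. The first step is to check that the hypotheses of the theorem are met: since $X$ is paracompact and \v{C}ech-complete, it is sieve-complete by the equivalence noted just before Theorem \ref{theorem-prohorov-sections}; and by the cited results of Banakh, Top{\o}e, and Choban, $\mathscr{P}(X)$ itself is paracompact (in fact \v{C}ech-complete). Theorem \ref{theorem-prohorov-sections} then yields the desired usco $\varphi:\mathscr{P}(X)\to\mathscr{C}(X)$ with $\mu(X\setminus\varphi(\mu))<\varepsilon$ for every $\mu\in\mathscr{P}(X)$.

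Next, I would turn to the induced map $\Phi(T)=\bigcup\{\varphi(\mu):\mu\in T\}$. Well-definedness is immediate from the remark in the paper that an usco mapping sends compact sets to compact sets, so $\Phi(T)\in\mathscr{C}(X)$ for each $T\in\mathscr{C}(\mathscr{P}(X))$. The integral inequality $\mu(X\setminus\Phi(T))<\varepsilon$ for $\mu\in T$ is automatic, since $\varphi(\mu)\subset\Phi(T)$ forces $X\setminus\Phi(T)\subset X\setminus\varphi(\mu)$, and the right-hand side has $\mu$-measure less than $\varepsilon$ by the first part.

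The only substantive step is verifying $\tau_V^+$-continuity of $\Phi$, and this is where one must be careful about which topology governs which space. Given a basic $\tau_V^+$-open neighbourhood $\langle V\rangle$ of $\Phi(T)$ in $\mathscr{C}(X)$, the condition $\Phi(T)\subset V$ is equivalent to $\varphi(\mu)\subset V$ for every $\mu\in T$, i.e.\ $T\subset \varphi^{\#}(V)$. Since $\varphi$ is u.s.c., the set $\varphi^{\#}(V)$ is open in $\mathscr{P}(X)$, so $\langle\varphi^{\#}(V)\rangle$ is a $\tau_V^+$-open neighbourhood of $T$ in $\mathscr{C}(\mathscr{P}(X))$; and for any $T'$ in that neighbourhood, each $\mu'\in T'$ satisfies $\varphi(\mu')\subset V$, whence $\Phi(T')\subset V$, i.e.\ $\Phi(T')\in\langle V\rangle$. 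This will finish the proof.

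The main (and essentially only) obstacle is conceptual rather than technical: one has to notice that the $\tau_V^+$-continuity of $T\mapsto\bigcup\varphi(T)$ is really just a rephrasing of upper semi-continuity of $\varphi$, once compactness of the image is secured by Proposition \ref{proposition-compact-upper} together with the usco property.
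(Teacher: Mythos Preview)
Your proposal is correct and follows the same route as the paper: cite that $\mathscr{P}(X)$ is paracompact when $X$ is paracompact \v{C}ech-complete, then apply Theorem~\ref{theorem-prohorov-sections} with $Z=\mathscr{P}(X)$. The paper treats the ``In particular'' clause about $\Phi$ as immediate, whereas you spell out the verification of $\tau_V^+$-continuity and of the inequality $\mu(X\setminus\Phi(T))<\varepsilon$; your arguments for both are correct and are exactly the intended ones.
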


\newcommand{\noopsort}[1]{} \newcommand{\singleletter}[1]{#1}
\providecommand{\bysame}{\leavevmode\hbox to3em{\hrulefill}\thinspace}
\providecommand{\MR}{\relax\ifhmode\unskip\space\fi MR }
\providecommand{\MRhref}[2]{%
  \href{http://www.ams.org/mathscinet-getitem?mr=#1}{#2}
}
\providecommand{\href}[2]{#2}


\end{document}